\documentclass[12pt]{article}

\usepackage[T1]{fontenc}
\usepackage[utf8]{inputenc}
\usepackage[a4paper,margin=1in]{geometry}
\usepackage{amsmath,amssymb,amsthm,mathtools}
\usepackage{enumitem}
\usepackage{booktabs}
\usepackage{microtype}
\usepackage{graphicx}
\usepackage{xcolor}
\usepackage{tikz}
\usetikzlibrary{calc,backgrounds,positioning,fit,arrows.meta}
\usepackage[colorlinks=true,linkcolor=blue,citecolor=blue,urlcolor=blue]{hyperref}

\newtheorem{theorem}{Theorem}
\newtheorem{lemma}{Lemma}
\newtheorem{proposition}{Proposition}

\newtheorem{question}{Question}

\newcommand{\F}{\mathbb F}

\usetikzlibrary{calc,backgrounds,fit}
\definecolor{portzero}{RGB}{200,30,30}
\definecolor{portone}{RGB}{0,114,178}
\definecolor{porttwo}{RGB}{204, 153, 0} 
\definecolor{layergray}{RGB}{245,245,245}
\tikzset{
  every node/.style={font=\small},
  internal/.style={black,line width=0.65pt},
  labelnode/.style={font=\scriptsize,fill=white,inner sep=1.0pt,text=black},
  smalllabel/.style={font=\scriptsize,fill=white,inner sep=0.8pt,text=black},
  v0/.style={circle,draw=portzero,fill=white,line width=0.55pt,inner sep=1.0pt,minimum size=4.4pt},
  v1/.style={circle,draw=portone,fill=white,line width=0.55pt,inner sep=1.0pt,minimum size=4.4pt},
  v2/.style={circle,draw=porttwo,fill=white,line width=0.55pt,inner sep=1.0pt,minimum size=4.4pt},
  p0edge/.style={portzero,line width=0.58pt,opacity=.70,shorten <=2pt,shorten >=2pt},
  p1edge/.style={portone,line width=0.58pt,dash pattern=on 2.3pt off 1.2pt,opacity=.70,shorten <=2pt,shorten >=2pt},
  p2edge/.style={porttwo,line width=0.62pt,densely dotted,opacity=.82,shorten <=2pt,shorten >=2pt},
  legendline/.style={line width=0.8pt}
}

\title{On $k$-connected vertex-pancyclic graphs without pancyclic edges}
\author{Leyou Xu\footnote{Email: leyouxu@m.scnu.edu.cn}, Bo Zhou\footnote{Email: zhoubo@m.scnu.edu.cn} \\
School of Mathematical Sciences, South China Normal University\\
Guangzhou 510631, P.R. China}
\date{}

\begin{document}
\maketitle

\begin{abstract}
An edge of a  graph of order $n$  is  pancyclic if it lies in a cycle of every length $3,\ldots,n$. A graph of order $n$  is vertex-pancyclic if every vertex lies in a cycle of every length $3,\ldots,n$. 
Recently, Li and Zhan proved that  every $2$-connected $[4,2]$-graph 
of order at least seven   contains a pancyclic edge. Zhan asked whether there exists a positive integer $k$ such that every $k$-connected vertex-pancyclic graph contains a pancyclic edge. We answer this question by showing that 
for every positive integer $k$, there is a $k$-connected vertex-pancyclic graph containing no pancyclic edge.
\end{abstract}

\medskip
\noindent\textbf{Keywords.} vertex-pancyclic, pancyclic edge,  sum-free set, properly colored cycle.

\section{Introduction}

All graphs in this note are finite and simple. 
Pancyclicity is one of the standard ways of strengthening Hamiltonicity. A Hamiltonian graph contains a spanning cycle; a pancyclic graph contains cycles of every possible length. Since Bondy's foundational theorem on pancyclic graphs \cite{Bondy1971}, a natural line of research has been to determine 
when a condition that forces Hamiltonicity also forces pancyclicity; see, for example, the survey of Gould \cite{Gould2003} and the pancyclicity results of Bondy and Ingleton \cite{BondyIngleton1976} and Schmeichel and Hakimi \cite{SchmeichelHakimi1974}. This perspective is often summarized by Bondy's meta-conjectural principle that nontrivial Hamiltonicity hypotheses tend to imply pancyclicity, apart from a few natural exceptional families.


Let $G$ be a graph of order $n$, where $n\ge 3$. The graph $G$  is \emph{pancyclic} if it contains an $m$-cycle (a cycle of length $m$) for every $m\in\{3,\ldots,n\}$. A vertex $v\in V(G)$ is called pancyclic if it lies in an $m$-cycle for every such $m$, and $G$ is \emph{vertex-pancyclic} if every vertex of $G$ is pancyclic. Similarly, an edge $e\in E(G)$ is called a \emph{pancyclic edge} if, for every $m\in\{3,\ldots,n\}$, the edge $e$ lies in an $m$-cycle, and $G$ is  \emph{edge-pancyclic} if
every edge of $G$ is pancyclic.
Evidently,  an edge-pancyclic graph is vertex-pancyclic, and a  vertex-pancyclic graph is pancyclic. The reverse implications fail in general, and a substantial body of work has studied degree, closure, extremal, and structural hypotheses under which such local cycle properties (vertex-pancyclicity and edge-pancyclicity) follow; see \cite{Broersma1997,CreamGouldHirohata2018,Hendry1990,Hobbs1976,Randerath2002,ZhangHolton1993}.

A graph $G$ is $k$-connected if $|V(G)|>k$ and deleting any set of at most $k-1$ vertices leaves the graph connected. The minimum  such $k$ is the connectivity of $G$, written as $\kappa(G)$. This notion appeared in classical Hamiltonicity theory, for example, the Chv\'{a}tal-Erd\H{o}s theorem states that for any graph $G$ at least three vertices, if it is $\alpha(G)$-connected, then it  is Hamiltonian \cite{ChvatalErdos1972}, where $\alpha (G)$ is the independence number of $G$. 


A graph is called  an $[s,t]$-graph if every induced subgraph on $s$ vertices has at least $t$ edges. A graph $G$ with $k\ge \alpha(G)$ is is just  a $[k+1,1]$-graph. Recently, Li and Zhan \cite{LiZhan2026} proved that every $2$-connected $[4,2]$-graph of order at least seven contains a pancyclic edge. 
They also found a vertex-pancyclic graph of order~$12$ with no pancyclic edge. 
So vertex-pancyclicity alone is insufficient to guarantee a pancyclic edge. 
After thinking of the problem to characterize the vertex-pancyclic graphs that contain no pancyclic edge, Zhan posed the following question in \cite{LiZhan2026}.

\begin{question} \label{ques:zhan}
Does there exist a positive integer $k$ such that every $k$-connected vertex-pancyclic graph contains a pancyclic edge?
\end{question}


In this note, we provide an answer to this question by proving the theorem below.

\begin{theorem}\label{thm:main}
For every positive integer $k$, there exists a $k$-connected vertex-pancyclic graph that contains no pancyclic edge.
\end{theorem}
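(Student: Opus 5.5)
The plan is to give an explicit construction $G_k$ for each $k$ in which every edge misses at least one cycle length, while every vertex still lies on cycles of all lengths $3,\ldots,n$. The keywords point to how I would split the edges: most edges will miss length~$3$ (they lie in no triangle), and the remaining ``triangle'' edges will miss length~$n$ (they lie on no Hamiltonian cycle).

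\textbf{The triangle-free backbone.} I would start from a Cayley graph $H=\mathrm{Cay}(\Gamma,S)$ on an abelian group $\Gamma$, say $\Gamma=\F_2^d$ or $\mathbb Z_N$, whose connection set $S=-S$ is \emph{sum-free}: there are no $s_1,s_2,s_3\in S$ with $s_1+s_2=s_3$. Then $H$ is triangle-free, so none of its edges is pancyclic. Choosing $|S|\ge k$ large and $S$ generating $\Gamma$ makes $H$ vertex-transitive of degree $|S|$. I would then use a standard bound, for instance that a connected vertex-transitive graph has connectivity at least $2(|S|+1)/3$, to get $\kappa(H)\ge k$. With $S$ fairly dense, $H$ will also contain even and odd cycles of every length from $4$ (or $5$) up to $|\Gamma|$ through any vertex; I would show this by routing along generators.

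\textbf{The triangle gadgets.} Vertices also need triangles, so I would attach to the backbone a family of small gadgets. Each vertex gets a few ``port'' neighbours (colours $0,1,2$, as in the figure styles), forming triangles with it, and these triangles are the only triangles in $G_k$. The wiring would force every Hamiltonian cycle of $G_k$ to be \emph{properly coloured} with respect to the port colouring. That property is designed to exclude exactly the triangle edges from all Hamiltonian cycles, so those edges miss length $n$. Attaching the gadgets in a $k$-fold symmetric way (many copies, each joined to $k$ backbone vertices) should preserve $k$-connectivity, which I would verify with Menger's theorem.

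\textbf{Order of verification.} I would check the required properties in this sequence:
\begin{enumerate}[label=(\roman*)]
\item $\kappa(G_k)\ge k$.
\item The triangles of $G_k$ are exactly the gadget triangles; this uses sum-freeness and the absence of triangles spanning the backbone and a gadget.
\item Every Hamiltonian cycle is properly coloured and avoids the gadget triangle edges, so those edges miss length $n$.
\item Every vertex lies on a Hamiltonian cycle (an explicit properly coloured one) and on cycles of every length $3,\ldots,n-1$. These cycles come from its gadget triangle, then backbone cycles of each length extended by detours through gadgets, absorbing one or two gadget vertices to adjust parity.
\end{enumerate}
Together these give a $k$-connected vertex-pancyclic graph with no pancyclic edge.

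\textbf{Main obstacle.} I expect the crux to be the tension between steps (iii) and (iv). The gadgets must be rigid enough that no Hamiltonian cycle ever uses a triangle edge, yet flexible enough that every vertex, including gadget vertices, reaches every length up to $n$. My first attempt would make each gadget vertex have degree forcing: it has exactly two non-triangle neighbours of distinct port colours. A Hamiltonian cycle must then use both of those edges, so it cannot use the triangle edges. The remaining risk is that this forcing destroys length $n-1$ for some vertices, since a cycle of length $n-1$ omits just one vertex. If so, I would adjust by allowing a controlled exceptional vertex per gadget.
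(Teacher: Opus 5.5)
\textbf{There is a genuine gap, in step (iii).} Your overall shape matches the paper in spirit: sum-free connection sets keep most edges out of triangles, and triangles sit on ports $0,1,2$. But step (iii), which is what makes the triangle edges non-pancyclic, has no working mechanism, and the one you propose fails.

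Take a gadget vertex $g$ in a triangle $guw$ with exactly two further neighbours $p,q$. Then $g$ has degree $4$. A Hamiltonian cycle only has to use two of the four edges $gu,gw,gp,gq$, so nothing stops it from using $gu$.

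Degree-forcing of this kind only works at vertices of degree $2$, and a $k$-connected graph with $k\ge 3$ has none. There are two further conflicts:
\begin{itemize}
\item A vertex of degree $4$ already violates $k$-connectivity once $k\ge 5$.
\item Joining each gadget to $k$ backbone vertices contradicts ``exactly two non-triangle neighbours''.
\end{itemize}
So you would need a genuinely global obstruction to Hamiltonian cycles through every triangle edge. That obstruction must hold in a graph that is simultaneously $k$-connected and has a Hamiltonian cycle through every vertex. The proposal gives none, and the fallback (``a controlled exceptional vertex per gadget'') is unspecified.

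\textbf{How the paper avoids this.} It chooses a different missing length for the triangle edges: $4$ instead of $n$.
\begin{itemize}
\item \emph{Construction.} The graph is a blow-up indexed by $Q=\F_2^3$. In each of $k$ layers, every $x\in Q$ becomes a triangle $x_0^{(r)}x_1^{(r)}x_2^{(r)}$. The vertices $x_i^{(r)}$ and $y_i^{(s)}$ are adjacent iff $x+y\in P_i$, where $P_0,P_1,P_2$ partition $Q\setminus\{000\}$ into sum-free sets.
\item \emph{External edges miss length $3$.} This follows from sum-freeness.
\item \emph{Internal edges miss length $4$.} Every vertex has all its non-triangle neighbours on its own port. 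So a $4$-cycle through $x_i^{(r)}x_j^{(r)}$ would have to be $x_i^{(r)}\,y_i^{(p)}\,y_j^{(p)}\,x_j^{(r)}$, which forces $x+y\in P_i\cap P_j=\emptyset$.
\end{itemize}
This is a purely local check, and it is compatible with degrees $2+k|P_i|$ and connectivity $2k+2$.

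Note also that in the paper's graph, long cycles, including Hamiltonian ones, use many triangle edges. They are built by lifting properly coloured $q$-cycles of the auxiliary coloured graph $R_k$, crossing each triangle by an internal path of length $1$ or $2$; this yields every length in $(2q,3q]$ through any prescribed port vertex. So proper colouring is a tool for routing cycles through the triangles, not a constraint that keeps Hamiltonian cycles away from them.
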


More precisely, for every positive integer $k$,  we construct a  graph $G_k$ of order $24k$ such that $G_k$ is $k$-connected (with connectivity $2k+2$) and vertex-pancyclic, but no edge of $G_k$ is pancyclic.

By Theorem \ref{thm:main}, there are vertex-pancyclic graphs with arbitrarily high connectivity yet still no pancyclic edge, implying that  a characterization of the vertex-pancyclic graphs that contain no pancyclic edge must involve other structural invariants.

The construction  is designed to answer Question~\ref{ques:zhan}. It does not contradict positive results for more restrictive graph classes.

The rest of the article is organized as follows. 
Section~\ref{sec:construction} defines the graphs $G_k$ and gives a compressed visual schematic of a port and an example. 
Section~\ref{sec:proof-main} proves Theorem~\ref{thm:main} by showing that $G_k$ is such a graph. 


\section{The construction}\label{sec:construction}


Let $A$ be a subset of an abelian group. We say that $A$ is \emph{sum-free} if $a+b\notin A$ for all $a,b\in A$.

Let $Q=\F_2^3$.
We write the elements of $Q$ as binary triples, and addition in $Q$ is coordinatewise modulo $2$. Put
\[
P_0=\{100,010\},\qquad
P_1=\{001,110,101\},\qquad
P_2=\{011,111\}.
\]
Then $P_0,P_1,P_2$ form a partition of $Q\setminus\{000\}$.

\begin{lemma}\label{lem:sumfree}
Each of $P_0,P_1,P_2$  is sum-free.
\end{lemma}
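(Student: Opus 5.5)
This is a finite check, and I would carry it out directly. One preliminary remark keeps it short. In $Q=\F_2^3$ every element satisfies $a+a=000$, and $000$ lies in none of the $P_i$. So the case $a=b$ in the definition of sum-free is automatic, and only sums of two \emph{distinct} elements of the same part need checking. Addition is commutative, so each unordered pair needs to be examined only once.

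The plan is to list these pairwise sums part by part and confirm that each sum falls outside the part it came from:
\begin{itemize}[nosep]
\item For $P_0$ there is one pair: $100+010=110$, and $110\in P_1$, so $110\notin P_0$.
\item For $P_2$ there is one pair: $011+111=100$, and $100\in P_0$, so $100\notin P_2$.
\item For $P_1$ there are three pairs: $001+110=111\in P_2$, then $001+101=100\in P_0$, and $110+101=011\in P_2$. None of these sums lies in $P_1$.
\end{itemize}
Together with the observation about $a=b$, this shows that each of $P_0,P_1,P_2$ is sum-free.

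There is no real obstacle here. The only point needing care is not to forget the diagonal case $a=b$, which the characteristic-$2$ observation disposes of uniformly. As an optional sanity check, one could note that $P_1$ is the largest part and the only one needing more than one verification. One could also observe that its three sums form the set $\{111,100,011\}$, which lies entirely in $P_0\cup P_2$. This matches the partition structure the construction will exploit later.
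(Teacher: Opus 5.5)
Your proof is correct and matches the paper's argument exactly. You dispose of the case $a=b$ via $a+a=000\notin P_i$, then verify the five sums of distinct pairs ($110$, $111$, $100$, $011$, $100$) and check that each lands outside the part it came from.
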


\begin{proof}  For each $i=0,1,2$, if $a\in P_i$, then 
$a+a=000\notin P_i$. It remains only to check sums of two distinct elements. For $P_0$, $100+010=110\in P_1$.
For $P_1$, $001+110=111\in P_2$, 
$001+101=100\in P_0$ and 
$110+101=011\in P_2$.
For $P_2$, $011+111=100\in P_0$.
So, for each $i\in\{0,1,2\}$, no sum of two elements of $P_i$ lies in $P_i$.
\end{proof}

Fix a positive integer $k$. We now define a  graph $G_k$ as follows. Its vertex set is
\[
V(G_k)=Q\times\{0,1,2\}\times\{1,\ldots,k\}.
\]
For $x\in Q$, $i\in \{0,1,2\}$ and $r\in\{1,\ldots,k\}$, 
we write the vertex $(x,i,r)$ as $x_i^{(r)}$. The index $i\in\{0,1,2\}$ is called the \emph{port} of  $x_i^{(r)}$. Obviously, 
$|V(G_k)|=8\cdot 3\cdot k=24k$.
The edges of $G_k$ are of two types.

\begin{enumerate}[label=(\roman*)]
\item Internal edges. For every $x\in Q$ and every $r\in\{1,\ldots,k\}$,  $x_0^{(r)}, x_1^{(r)},  x_2^{(r)}$ are pairwise adjacent, so they form  a \emph{small triangle}, written as $T_x^{(r)}$ when $x$ and $r$ are fixed. 

\item External edges. If $x,y\in Q$ are distinct, $r,s\in\{1,\ldots,k\}$, and $x+y\in P_i$, then $x_i^{(r)}$ and $y_i^{(s)}$ (with the same port $i$)  are adjacent. We call the edge $x_i^{(r)}y_i^{(s)}$ an $i$-external edge.
%
\end{enumerate}

The structure of $G_k$ is simple: internal edges stay inside each small triangle $T_x^{(r)}$; external edges connect vertices with different $Q$-labels, but only when they share the same port $i$ and their $Q$-labels sum to an element of $P_i$. 
To better understand the external edges, we first fix a port $i \in \{0,1,2\}$ and illustrate how vertices from different $Q$-labels are connected via this port. This is shown in Figure~\ref{fig:schematic}. For example with $x=100\in Q$ and $y=000\in Q$, as $x+y=100\in P_0$, the line between $x=100$ and $y=000$ stands for the edges between $x_0^{(r)}$ and $y_0^{(s)}$ for all $r,s\in \{1,\dots,k\}$.
With the port structure in mind, we now present the full graph $G_1$ in Figure~\ref{fig:G1}.


\begin{figure}[!htbp]
\centering
\resizebox{0.95\textwidth}{!}{%
\begin{tikzpicture}[
    q0/.style={v0,minimum size=15.5pt,inner sep=1.2pt,font=\tiny,text=black},
    q1/.style={v1,minimum size=15.5pt,inner sep=1.2pt,font=\tiny,text=black},
    q2/.style={v2,minimum size=15.5pt,inner sep=1.2pt,font=\tiny,text=black},
    paneltitle/.style={font=\small\bfseries,align=center,text width=3.2cm},
    panelnote/.style={font=\scriptsize,align=center,text width=3.6cm}
]
\def\labellist{0/000/90,1/001/45,3/011/0,2/010/315,6/110/270,7/111/225,5/101/180,4/100/135}

\begin{scope}[shift={(0,0)}]
  \foreach \id/\lab/\ang in \labellist{
    \coordinate (g0c-\id) at (\ang:1.42);
  }

  \foreach \a/\b in {0/4,1/5,2/6,3/7,0/2,1/3,4/6,5/7}{
    \draw[p0edge] (g0c-\a)--(g0c-\b);
  }

  \foreach \id/\lab/\ang in \labellist{
    \node[q0] at (g0c-\id) {\lab};
  }

  \node[panelnote] at (0,-2.2) {port $0$\\$P_0=\{100,010\}$};
\end{scope}

\begin{scope}[shift={(4.15,0)}]
  \foreach \id/\lab/\ang in \labellist{
    \coordinate (g1c-\id) at (\ang:1.42);
  }

  \foreach \a/\b in {0/1,2/3,4/5,6/7,0/6,1/7,2/4,3/5,0/5,1/4,2/7,3/6}{
    \draw[p1edge] (g1c-\a)--(g1c-\b);
  }

  \foreach \id/\lab/\ang in \labellist{
    \node[q1] at (g1c-\id) {\lab};
  }

  \node[panelnote] at (0,-2.2) {port $1$\\$P_1=\{001,110,101\}$};
\end{scope}

\begin{scope}[shift={(8.30,0)}]
  \foreach \id/\lab/\ang in \labellist{
    \coordinate (g2c-\id) at (\ang:1.42);
  }

  \foreach \a/\b in {0/3,1/2,4/7,5/6,0/7,1/6,2/5,3/4}{
    \draw[p2edge] (g2c-\a)--(g2c-\b);
  }

  \foreach \id/\lab/\ang in \labellist{
    \node[q2] at (g2c-\id) {\lab};
  }

  \node[panelnote] at (0,-2.2){port $2$\\$P_2=\{011,111\}$};
\end{scope}
\end{tikzpicture}%
}
\caption{Illustration of the external edges through a fixed port $i\in \{0,1,2\}$.}
\label{fig:schematic}
\end{figure}

\section{Proof of Theorem~\ref{thm:main}}\label{sec:proof-main}

\begin{figure}[!htbp]
\centering
\begin{tikzpicture}[x=1cm,y=1cm]
\coordinate (A000p0) at (-0.800,0.750);
\coordinate (A000p1) at (-1.460,-0.413);
\coordinate (A000p2) at (-0.140,-0.413);
\coordinate (A100p0) at (3.050,0.750);
\coordinate (A100p1) at (2.390,-0.413);
\coordinate (A100p2) at (3.710,-0.413);
\coordinate (A010p0) at (-0.800,4.750);
\coordinate (A010p1) at (-1.460,3.587);
\coordinate (A010p2) at (-0.140,3.587);
\coordinate (A110p0) at (3.050,4.750);
\coordinate (A110p1) at (2.390,3.587);
\coordinate (A110p2) at (3.710,3.587);
\coordinate (A001p0) at (1.800,2.750);
\coordinate (A001p1) at (1.140,1.587);
\coordinate (A001p2) at (2.460,1.587);
\coordinate (A101p0) at (5.650,2.750);
\coordinate (A101p1) at (4.990,1.587);
\coordinate (A101p2) at (6.310,1.587);
\coordinate (A011p0) at (1.800,6.750);
\coordinate (A011p1) at (1.140,5.587);
\coordinate (A011p2) at (2.460,5.587);
\coordinate (A111p0) at (5.650,6.750);
\coordinate (A111p1) at (4.990,5.587);
\coordinate (A111p2) at (6.310,5.587);
\draw[p0edge] (A000p0) -- (A100p0);
\draw[p0edge] (A000p0) -- (A010p0);
\draw[p0edge] (A100p0) -- (A110p0);
\draw[p0edge] (A010p0) -- (A110p0);
\draw[p0edge] (A001p0) -- (A101p0);
\draw[p0edge] (A001p0) -- (A011p0);
\draw[p0edge] (A101p0) -- (A111p0);
\draw[p0edge] (A011p0) -- (A111p0);
\draw[p1edge] (A000p1) -- (A110p1);
\draw[p1edge] (A000p1) -- (A001p1);
\draw[p1edge] (A000p1) -- (A101p1);
\draw[p1edge] (A100p1) -- (A010p1);
\draw[p1edge] (A100p1) -- (A001p1);
\draw[p1edge] (A100p1) -- (A101p1);
\draw[p1edge] (A010p1) -- (A011p1);
\draw[p1edge] (A010p1) -- (A111p1);
\draw[p1edge] (A110p1) -- (A011p1);
\draw[p1edge] (A110p1) -- (A111p1);
\draw[p1edge] (A001p1) -- (A111p1);
\draw[p1edge] (A101p1) -- (A011p1);
\draw[p2edge] (A000p2) -- (A011p2);
\draw[p2edge] (A000p2) -- (A111p2);
\draw[p2edge] (A100p2) -- (A011p2);
\draw[p2edge] (A100p2) -- (A111p2);
\draw[p2edge] (A010p2) -- (A001p2);
\draw[p2edge] (A010p2) -- (A101p2);
\draw[p2edge] (A110p2) -- (A001p2);
\draw[p2edge] (A110p2) -- (A101p2);
\draw[internal] (A000p0) -- (A000p1) -- (A000p2) -- cycle;
\node[v0] at (A000p0) {};
\node[v1] at (A000p1) {};
\node[v2] at (A000p2) {};
\node[labelnode] at (-0.800,0.000) {$\mathtt{000}$};
\draw[internal] (A100p0) -- (A100p1) -- (A100p2) -- cycle;
\node[v0] at (A100p0) {};
\node[v1] at (A100p1) {};
\node[v2] at (A100p2) {};
\node[labelnode] at (3.050,0.000) {$\mathtt{100}$};
\draw[internal] (A010p0) -- (A010p1) -- (A010p2) -- cycle;
\node[v0] at (A010p0) {};
\node[v1] at (A010p1) {};
\node[v2] at (A010p2) {};
\node[labelnode] at (-0.800,4.000) {$\mathtt{010}$};
\draw[internal] (A110p0) -- (A110p1) -- (A110p2) -- cycle;
\node[v0] at (A110p0) {};
\node[v1] at (A110p1) {};
\node[v2] at (A110p2) {};
\node[labelnode] at (3.050,4.000) {$\mathtt{110}$};
\draw[internal] (A001p0) -- (A001p1) -- (A001p2) -- cycle;
\node[v0] at (A001p0) {};
\node[v1] at (A001p1) {};
\node[v2] at (A001p2) {};
\node[labelnode] at (1.800,2.000) {$\mathtt{001}$};
\draw[internal] (A101p0) -- (A101p1) -- (A101p2) -- cycle;
\node[v0] at (A101p0) {};
\node[v1] at (A101p1) {};
\node[v2] at (A101p2) {};
\node[labelnode] at (5.650,2.000) {$\mathtt{101}$};
\draw[internal] (A011p0) -- (A011p1) -- (A011p2) -- cycle;
\node[v0] at (A011p0) {};
\node[v1] at (A011p1) {};
\node[v2] at (A011p2) {};
\node[labelnode] at (1.800,6.000) {$\mathtt{011}$};
\draw[internal] (A111p0) -- (A111p1) -- (A111p2) -- cycle;
\node[v0] at (A111p0) {};
\node[v1] at (A111p1) {};
\node[v2] at (A111p2) {};
\node[labelnode] at (5.650,6.000) {$\mathtt{111}$};
\begin{scope}[shift={(8.7,5.85)},scale=0.70]
  \node[font=\scriptsize] at (2.5,0.2) {ports in each $T_x^{(1)}$};
  \coordinate (Lg0) at (0,0.62); \coordinate (Lg1) at (-0.58,-0.42); \coordinate (Lg2) at (0.58,-0.42);
  \draw[internal] (Lg0)--(Lg1)--(Lg2)--cycle;
  \node[v0] at (Lg0) {}; \node[v1] at (Lg1) {}; \node[v2] at (Lg2) {};
  \node[font=\scriptsize] at (0,0.94) {$0$};
  \node[font=\scriptsize] at (-0.87,-0.55) {$1$};
  \node[font=\scriptsize] at (0.87,-0.55) {$2$};
  \draw[internal] (-0.3,-1.2) -- (0.5,-1.2);
  \node[right,black,font=\scriptsize] at (0.5,-1.2) {internal edges};
  \draw[p0edge, legendline] (-0.3,-2.2) -- (0.5,-2.2);
  \node[right, black, font=\scriptsize] at (0.5,-2.2) {$0$-external edges};
  \draw[p1edge, legendline] (-0.3,-3.2) -- (0.5,-3.2);
  \node[right, black, font=\scriptsize] at (0.5,-3.2) {$1$-external edges};
  \draw[p2edge, legendline] (-0.3,-4.2) -- (0.5,-4.2);
  \node[right, black, font=\scriptsize] at (0.5,-4.2) {$2$-external edges};
\end{scope}
\end{tikzpicture}
\caption{The graph $G_1$.}
\label{fig:G1}
\end{figure}

We prove Theorem~\ref{thm:main} in this section by showing that $G_k$ is such a graph. 
The connectivity bound and the absence of pancyclic edges are established in Subsections~\ref{sec:connectivity} and \ref{sec:noedge}, respectively, while Subsection~\ref{sec:vertex} proves vertex-pancyclicity via direct construction of short cycles and a properly colored auxiliary graph for longer cycles.

Consider the graph  $G_k$.
For $r\in\{1,\ldots,k\}$, we call the set
\[
L_r=\{x_i^{(r)}:x\in Q,\, i\in\{0,1,2\}\}
\]
the $r$-th layer.

\subsection{Connectivity}\label{sec:connectivity}

\begin{lemma}\label{lem:connectivity}
For every positive integer $k$, the graph $G_k$ is $k$-connected.
\end{lemma}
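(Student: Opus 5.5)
We will show directly that deleting any set $S$ of at most $k-1$ vertices leaves $G_k$ connected; the order $24k>k$ is clear. The idea is that every ``type'' of vertex, i.e.\ a pair $(x,i)\in Q\times\{0,1,2\}$, has $k$ copies $x_i^{(1)},\dots,x_i^{(k)}$, and all copies of the same type have the same external neighbourhood, which is a union of full fibres $\{y_i^{(s)}:s=1,\dots,k\}$. Since $|S|\le k-1<k$, no fibre is entirely deleted, and no layer is entirely deleted.

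\textbf{Step 1 (the quotient graph is connected).} Let $H$ be the graph on $Q\times\{0,1,2\}$ (24 vertices, i.e.\ $G_1$) where $(x,i)\sim(x,j)$ for $i\ne j$ and $(x,i)\sim(y,i)$ whenever $x+y\in P_i$. Because the $P_i$ partition $Q\setminus\{000\}$, any two distinct labels $x,y$ satisfy $x+y\in P_i$ for exactly one $i$. So every two small triangles are joined by an external edge, and $H$ is connected. In particular, in $G_k$ itself, for every $x\ne y$ and every $r,s$ there is an edge between $T_x^{(r)}$ and $T_y^{(s)}$.

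\textbf{Step 2 (surviving vertices reach a common hub).} Let $u=x_i^{(r)}\notin S$ be arbitrary. Its external neighbours through port $i$ are all of $\{y_i^{(s)}\}$ with $x+y\in P_i$. Since $P_i\ne\emptyset$, such a $y$ exists, and the fibre of $y_i$ has $k>|S|$ vertices, so $u$ has a surviving neighbour $y_i^{(s)}$ for some $s$. I would pick a fixed layer $r_0$ with $L_{r_0}\cap S=\emptyset$ (possible since there are $k$ layers and at most $k-1$ deleted vertices). It then suffices to show that every surviving vertex is joined to $L_{r_0}$ in $G_k-S$, because $L_{r_0}$ induces a copy of $H$, which is connected by Step 1.

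\textbf{Step 3 (routing to the clean layer).} For a surviving $u=x_i^{(r)}$, choose $y$ with $x+y\in P_i$. Then $u$ is adjacent to $y_i^{(r_0)}\in L_{r_0}$ directly, since external edges exist for all pairs of layers $r,s$. So $u$ reaches $L_{r_0}$ in one step, and $G_k-S$ is connected.

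The only point needing care is Step 1, specifically that each $P_i$ is nonempty so that every port vertex has an external neighbour. This is immediate from the explicit sets. Beyond that the argument is routine. The stronger bound $2k+2$ mentioned in the paper is not needed for the lemma, so I would not attempt it here.
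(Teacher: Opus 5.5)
Your proof is correct and is essentially the paper's argument: each layer induces a connected graph because any two distinct labels differ by an element of some $P_i$. You then pick a layer $L_{r_0}$ untouched by the at most $k-1$ deleted vertices, and note that every surviving $x_i^{(r)}$ is adjacent to $(x+a)_i^{(r_0)}$ for any $a\in P_i$. The fibre observation in your Step~2 is harmless but unnecessary, since Step~3 already routes every surviving vertex into $L_{r_0}$ in one step.
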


\begin{proof}
It is enough to prove that deleting fewer than $k$ vertices leaves a connected graph.

We first claim that each layer $G_k[L_r]$ is connected. Indeed, if $x,y\in Q$ and $x\ne y$, then $x+y\in P_i$ for some unique $i\in\{0,1,2\}$, and so $x_i^{(r)}y_i^{(r)}$ is an edge. Since each $T_x^{(r)}$ is a triangle, it follows that all small triangles in $L_r$ are connected to each other, and hence $G_k[L_r]$ is connected.

Let $G_k'$ be a graph obtained from $G_k$ by deleting fewer than $k$ vertices. Since there are $k$ layers, at least one layer, say $L_{r_0}$ is unchanged,  Then  $G_k'[L_{r_0}]\cong G_k[L_{r_0}]$ is connected.

Let $x_i^{(r)}$ be a vertex not in $L_{r_0}$. As $P_i$ is nonempty, choose $a\in P_i$. Then $x+a\ne x$. Recall that $(x+a)_i^{(r_0)}\in V(G_k')$. By the definition of external edges, we have $x_i^{(r)}(x+a)_i^{(r_0)}\in E(G_k')$.
Therefore every vertex not in $L_{r_0}$ is adjacent to the connected subgraph $G_t'[L_{r_0}]$, which implies that $G_k'$ is connected. 
\end{proof}

In fact, we may show that the connectivity of $G_k$ is $2k+2$

\begin{lemma}\label{layer}
For a layer $L_r$, $G[L_r]$ is $4$-connected.
\end{lemma}
\begin{proof}
For $x\in Q$, we write $T_x$ for $T_x^{(r)}$ in the layer $L_r$ (i.e., omit the layer number). Let $S$ be any vertex set of $L_r$ with $|S|\le 3$. It suffices to show that $G[L_r]-S$ is connected. 

We call a triangle $T_x$ is intact if no vertex is deleted (i.e., $T_x\cap S=\emptyset$), and damaged otherwise. We say two triangles $T_x$ and $T_y$ are adjacent if there is an edge between some vertex of $T_x$ and some of $T_y$. As $|S|\le 3$, there are at most three damaged triangles.

For $x\ne y$, if $T_x$ and $T_y$ are intact, as $x+y\in P_i$ for some $i\in \{0,1,2\}$, then $T_x$ and $T_y$ are adjacent by $x_iy_i$. 

It remains to show that any damaged triangle is adjacent to some intact triangle. Suppose to the contrary that some damaged triangle $T_x$ is not adjacent to any intact triangle. Let $I=\{i=0,1,2:x_i\notin S\}$. Then $|I|=1,2$. Let $i\in I$. For any $a\in P_i$, $x_i$ is adjacent to $(x+a)_i$, and so $T_{x+a}$ is damaged. As $|P_i|\ge 2$, together with $T_x$, there are at least three damaged triangles. However, $|S|\le 3$. There are exactly three damaged triangles, implying that $|I|=2$. Let $j\in I\setminus \{i\}$. For any $b\in P_j$, $x_j$ is adjacent to $(x+b)_j$, and so $T_{x+b}$ is damaged. Note that for any $a\in P_i$ and $b\in P_j$, as $x+a\ne x+b$, $T_{x+a}$ is different from $T_{x+b}$, implying that there are at least four damaged triangles, a contradiction. 
So any damaged triangle is connected to some intact triangle, proving the lemma.
\end{proof}

\begin{proposition} $\kappa(G)= 
2k+2$.
\end{proposition}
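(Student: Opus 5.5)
The proposition asserts $\kappa(G_k)=2k+2$. I would prove the two inequalities separately; the upper bound is a degree count, and the lower bound is where the work lies.

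\emph{Upper bound.} Compute degrees. A vertex $x_i^{(r)}$ has two internal neighbours in $T_x^{(r)}$. Its external neighbours are exactly the vertices $(x+a)_i^{(s)}$ with $a\in P_i$ and $s\in\{1,\ldots,k\}$. So
\[
\deg x_i^{(r)}=2+k|P_i| .
\]
Since $|P_0|=|P_2|=2$, every vertex of port $0$ or $2$ has degree $2k+2$. Its neighbourhood is a separating set, because $24k>2k+3$ and so some vertex lies outside the closed neighbourhood. Hence $\kappa(G_k)\le 2k+2$.

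\emph{Lower bound.} Let $S$ be any set with $|S|\le 2k+1$; I must show $G_k-S$ is connected.
\begin{enumerate}[label=(\arabic*)]
\item By pigeonhole over the $k$ layers, some layer $L_{r_0}$ satisfies $|S\cap L_{r_0}|\le 2$. Indeed, if every layer met $S$ in at least $3$ vertices we would have $|S|\ge 3k>2k+1$.
\item By Lemma~\ref{layer}, $G_k[L_{r_0}]-S$ is connected. Call this subgraph $H$.
\item It then suffices to show that every vertex $v=x_i^{(r)}\notin S$ with $r\ne r_0$ is joined to $H$ by a path avoiding $S$.
\end{enumerate}

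\emph{Reaching $H$.} I would do this with a fan-type count: exhibit $2k+2$ paths from $v$ into $L_{r_0}$ that are pairwise internally disjoint and end at distinct vertices. Then $S$ would have to meet every one of them, forcing $|S|\ge 2k+2$, a contradiction. The candidate paths are as follows.
\begin{itemize}
\item Direct edges $v\,(x+a)_i^{(r_0)}$ for $a\in P_i$. This gives at least $2$ paths.
\item For each other port $j\ne i$, the two-edge paths $v\,x_j^{(r)}\,(x+b)_j^{(r_0)}$ with $b\in P_j$. These go through the small triangle $T_x^{(r)}$.
\item For each layer $s\notin\{r,r_0\}$ and $a\in P_i$, the two-edge paths $v\,(x+a)_i^{(s)}\,w$, where $w$ is a port-$i$ vertex of $L_{r_0}$ adjacent to $(x+a)_i^{(s)}$.
\end{itemize}
The identity $(x+a)+(x+a')=a+a'$ together with sum-freeness (Lemma~\ref{lem:sumfree}) controls which labels these endpoints can take.

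\emph{Expected obstacle.} The main difficulty is making the endpoints distinct: layer $L_{r_0}$ has only $8$ labels per port, but there are up to $2(k-2)$ paths of the third kind. For large $k$ the paths cannot all end in $L_{r_0}$ at distinct vertices. My first attempt would therefore weaken the goal. Instead of distinct endpoints, I would only require that each path either meets $S$ internally or ends at a vertex of $L_{r_0}\setminus S$. Since $|S\cap L_{r_0}|\le 2$, only boundedly many endpoints can be deleted, while the internal vertices $(x+a)_i^{(s)}$ are $2(k-2)$ distinct vertices. Each deleted endpoint can kill many paths at once, so I would reroute: from $(x+a)_i^{(s)}$ there are $|P_i|\ge2$ choices of port-$i$ endpoint in $L_{r_0}$, and sum-freeness of $P_i$ should show these choices never coincide with the direct endpoints $(x+a)_i^{(r_0)}$. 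A careful count of $2+2\cdot 2+2(k-2)+\cdots$ against $2k+1$, with separate cases according to whether $i\in\{0,2\}$ or $i=1$, should then close the argument.
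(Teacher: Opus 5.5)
Your upper bound is fine, and so is your reduction to a layer $L_{r_0}$ with $|S\cap L_{r_0}|\le 2$ followed by Lemma~\ref{layer}; both match the paper. One small point: $3k>2k+1$ needs $k\ge 2$, so for $k=1$ you should apply Lemma~\ref{layer} directly with $|S|\le 3$.

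The genuine gap is the final step. You leave it at ``should show'' and ``should then close'', and the count you sketch does not close as written. For a fixed $j\ne i$, all paths $v\,x_j^{(r)}\,(x+b)_j^{(r_0)}$ pass through the single vertex $x_j^{(r)}$. So deleting two vertices kills every path through $T_x^{(r)}$, and these paths contribute $2$, not $2\cdot 2$, to the lower bound on $|S|$. Your third family also omits $s=r$, i.e.\ the neighbours $(x+a)_i^{(r)}$ of $v$ in its own layer. Counted correctly, your listed families force only $2+2+2(k-2)=2k$ vertices into $S$. That is compatible with $|S|\le 2k+1$, so no contradiction results.

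The missing idea is that distinct endpoints are not needed at all, so the ``obstacle'' you anticipate never arises.
\begin{itemize}
\item If some direct neighbour $(x+a)_i^{(r_0)}$ avoids $S$, you are done.
\item Otherwise $|P_i|\le|S\cap L_{r_0}|\le 2$. This forces $i\in\{0,2\}$ and $S\cap L_{r_0}=\{(x+a)_i^{(r_0)}:a\in P_i\}$ exactly, so every other vertex of $L_{r_0}$ survives.
\item In that case each $x_j^{(r)}$ with $j\ne i$ has its port-$j$ neighbours $(x+b)_j^{(r_0)}$ outside $S$.
\item Also, every $(x+a)_i^{(s)}$ with $s\ne r_0$, including $s=r$, is adjacent to the one vertex $x_i^{(r_0)}\notin S$, because $(x+a)+x=a\in P_i$.
\end{itemize}
Hence if $v$ is cut off from $L_{r_0}\setminus S$, every neighbour of $v$ lies in $S$, giving $|S|\ge\deg v=2k+2$, a contradiction.

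This is the paper's argument. Your sum-freeness rerouting observation is correct but unnecessary, since the single endpoint $x_i^{(r_0)}$ serves all the cross-layer paths at once.
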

\begin{proof}
For $i\in \{0,2\}$, $d(x_i^{(r)})=2+k|P_i|=2+2k$, we have by Whitney's theorem that $\kappa(G_k)\le 2+2k$.
Let $S$ be a vertex set with $|S|\le 1+2k$. It suffices to show that $G_k-S$ is connected. If $k=1$, then $G_1$ has one layer and so the result follows by Lemma \ref{layer}. Suppose in the following that $k\ge 2$.

As there are $k$ layers and $|S|\le 1+2k$, there exists one layer $L:=L_{r_0}$ such that $|S\cap L|\le 2$. By Lemma \ref{layer}, $G_k[L\setminus S]$ is connected. 
It suffices to show that any vertex in $V(G_k)\setminus (L\cup S)$ is connected to $L\setminus S$.

Suppose to the contrary that there is a vertex $u:=x_i^{(r)}\in V(G_k)\setminus (L\cup S)$ that is not connected to $L\setminus S$. 
Note that $N_L(u)=\{(x+a)_i^{(r_0)}:a\in P_i\}$. If $N_L(u)\setminus S\ne \emptyset$, then $u$ is adjacent to some vertex of $L\setminus S$, a contradiction. Then $N_L(u)\subseteq S$. As $|S\cap L|\le 2$, we have $i\in \{0,2\}$ and $S\cap L=N_L(u)$. 
We will show that $N(u)\subseteq S$. 
If $x_j^{(r)}\notin S$ with $j\ne i$, then for any $a\in P_j$, $(x+a)_j^{(r_0)}\in L\setminus S$ is adjacent to $x_j^r$, implying that $ux_j^{(r)}(x+a)_j^{(r_0)}$ is a path connecting $u$ and $L\setminus S$, a contradiction. This shows that $x_j^r\in S$ for any $j\ne i$. If $(x+a)_i^{(s)}\notin S$ with $a\in P_i$ and $s\in \{1,\dots,k\}$, then $s\ne r_0$ and $(x+a)_i^{(s)}$ is adjacent to $x_i^{(r_0)}\in L$. Note that $x_i^{(r_0)}\notin S$. We have $u(x+a)_i^{(s)}x_i^{(r_0)}$ is a path connecting $u$ and $L\setminus S$, also a contradiction. This shows that  $(x+a)_i^{(s)}\in S$ for any $a\in P_i$ and $s\in \{1,\dots,k\}$. So $N(u)\subseteq S$. 
It then follows that $2+2k=2+k|P_i|\le d(u)\le |S|$, a contradiction. 

Therefore, any vertex in $V(G_k)\setminus (L\cup S)$ is connected to $L\setminus S$, implying that $G_k-S$ is connected. So  $\kappa(G_k)= 2+2k$.
\end{proof}
\subsection{Absence of pancyclic edges}\label{sec:noedge}

The next lemma is the main local reason why no edge can be pancyclic.

\begin{lemma}\label{lem:no-pancyclic-edge}
The graph $G_k$ contains no pancyclic edge.
\end{lemma}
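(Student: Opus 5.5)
The plan is to show that every edge of $G_k$ misses a short cycle length: an external edge lies in no $3$-cycle, and an internal edge lies in no $4$-cycle. Since $|V(G_k)|=24k\ge 24$, both $3$ and $4$ belong to $\{3,\ldots,n\}$, so this proves that no edge is pancyclic. Both facts come from a neighbourhood analysis using the definitions of internal and external edges. Lemma~\ref{lem:sumfree} is needed only for the first.

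First I record the adjacency structure. Two distinct vertices $u=x_i^{(r)}$ and $w=y_j^{(s)}$ are adjacent exactly in two cases:
\begin{itemize}
\item[(a)] $x=y$, $r=s$ and $i\ne j$ (an internal edge);
\item[(b)] $i=j$, $x\ne y$ and $x+y\in P_i$ (an external edge).
\end{itemize}
In particular, vertices with different ports are adjacent only if they have the same label and the same layer.

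\emph{External edges.} Take $u=x_i^{(r)}$ and $w=y_i^{(s)}$ with $x+y\in P_i$, and suppose $z$ is a common neighbour.
\begin{itemize}
\item If $z$ has a port $j\ne i$, then $z$ is internally adjacent to both $u$ and $w$. This forces $z=x_j^{(r)}=y_j^{(s)}$, contradicting $x\ne y$.
\item So $z=t_i^{(q)}$ has port $i$, with $t\ne x$ and $t\ne y$. Then $t+x\in P_i$ and $t+y\in P_i$. These two elements are distinct because $x\ne y$, and their sum is $x+y\in P_i$. This contradicts the sum-freeness of $P_i$ from Lemma~\ref{lem:sumfree}.
\end{itemize}
Hence no external edge lies in a triangle.

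\emph{Internal edges.} Take the edge $x_i^{(r)}x_j^{(r)}$ with $i\ne j$, and let $l$ be the third port. Suppose $x_i^{(r)}x_j^{(r)}wz$ is a $4$-cycle, where $w\sim x_j^{(r)}$, $z\sim x_i^{(r)}$ and $w\sim z$, with $w,z\notin\{x_i^{(r)},x_j^{(r)}\}$.
\begin{itemize}
\item If $w=x_l^{(r)}$, then $z$ is adjacent to both $x_i^{(r)}$ and $x_l^{(r)}$. By the first case above, any common neighbour of these two vertices with a port different from both would have to be $x_j^{(r)}$, which is excluded. A common neighbour with port $i$ or $l$ would need to be adjacent to a vertex of the other port, hence be in the same triangle, which is again impossible. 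So this case cannot occur.
\item The case $z=x_l^{(r)}$ is symmetric.
\item Otherwise $w=(x+b)_j^{(t)}$ with $b\in P_j$, and $z=(x+a)_i^{(s)}$ with $a\in P_i$. Now $w$ and $z$ have different ports $j\ne i$. Their labels $x+b$ and $x+a$ also differ, since $P_i\cap P_j=\emptyset$. By (a) and (b) they are not adjacent, a contradiction.
\end{itemize}
Hence no internal edge lies in a $4$-cycle.

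The only delicate step is organising the case analysis cleanly, in particular remembering that a common neighbour could have the same label in another layer. For external edges this possibility is already covered by the sum-free case, since $t=x$ is impossible when $x+x=000\notin P_i$. No global argument is needed.
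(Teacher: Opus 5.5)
Your proposal is correct and follows essentially the same route as the paper: no external edge lies in a triangle, by sum-freeness of $P_i$, and no internal edge lies in a $4$-cycle, by a neighbourhood case analysis. Your split on whether $w$ or $z$ equals $x_l^{(r)}$ is just a rephrasing of the paper's split on whether the cycle edges at $x_i^{(r)}$ and $x_j^{(r)}$ are internal or external.
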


\begin{proof}
We show that every edge misses at least one of the two shortest possible cycle lengths. More precisely, every external edge is contained in no triangle, and every internal edge is contained in no $4$-cycle.

First let $e=x_i^{(r)}y_i^{(s)}$ 
be an $i$-external edge for some $i=0,1,2$. Then $x\ne y$ and $x+y\in P_i$. Suppose to the contrary that $e$ lies in a triangle. Then $x_i^{(r)}$ and $y_i^{(s)}$ have a common neighbor, say $z_j^{(q)}$.
If $z_j^{(q)}x_i^{(r)}$ is an internal edge, then $z=x$, $q=r$, and $j\ne i$. However, $z_j^{(q)}$ is not adjacent to $y_i^{(s)}$, a contradiction. The same argument applies with $x_i^{(r)}$ and $y_i^{(s)}$ interchanged. 
This implies that $z_j^{(q)}x_i^{(r)}$ and  $z_j^{(q)}y_i^{(s)}$ are external edges.
Therefore $j=i$ and
\[
x+z\in P_i,\qquad y+z\in P_i.
\]
Then
\[
(x+z)+(y+z)=x+y\in P_i.
\]
This contradicts the sum-freeness of $P_i$ from Lemma~\ref{lem:sumfree} 
as $x+z$ and $y+z$ are distinct. Thus no external edge lies in a triangle.

Now let $e=x_i^{(r)}x_j^{(r)}$
be an internal edge. Then $i\ne j$. Suppose to the contrary that $e$ lies in a $4$-cycle. Then, after deleting the edge $e$, there is a path of the form  $x_i^{(r)}uvx_j^{(r)}$ with length three.  
If $x_i^{(r)}u$ is an internal edge, then $u=x_h^{(r)}$, where $h$ is the unique element of $\{0,1,2\}\setminus\{i,j\}$. Consider the next edge $uv$. If $uv$ is internal, then $v$ would have to be either $x_i^{(r)}$ or $x_j^{(r)}$, a contradiction. So $uv$ is an external edge, implying that $v=y_h^{(p)}$ for some $y\ne x$. As $h\ne j$, $vx_j^{(r)}$ cannot be an external edge,  so it is an internal edge. However, $y\ne x$, $vx_j^{(r)}$ cannot be an internal edge, a contradiction. Therefore $x_i^{(r)}u$ is an external edge. By the same argument, $vx_j^{(r)}$ is external.
It follows that
\[
u=y_i^{(p)},\qquad v=z_j^{(q)}
\]
for some $y,z\in Q$ and some $p,q\in \{1,\dots,k\}$ with
\[
x+y\in P_i,
\qquad
x+z\in P_j.
\]
As $i\ne j$, $uv$ cannot be an external edge. Thus $uv$ must be internal, so $y=z$ and $p=q$. Consequently
\[
x+y\in P_i\cap P_j=\emptyset,
\]
which is impossible. Hence no internal edge lies in a $4$-cycle.

Therefore every edge of $G_k$ fails to lie in at least one cycle length $3$ or $4$. 
\end{proof}

\subsection{Vertex-pancyclicity}\label{sec:vertex}

It remains to prove that every vertex lies in cycles of all lengths from $3$ to $24k$. We first dispose of the short lengths.

\begin{lemma}\label{lem:short-cycles}
Every vertex of $G_k$ lies in a cycle of each length $3,4,5,6$.
\end{lemma}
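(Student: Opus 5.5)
The plan is to exploit the symmetry of $G_k$ and then exhibit a few explicit cycles inside a single layer. Two families of automorphisms are available. For $c\in Q$, the translation $x_i^{(r)}\mapsto (x+c)_i^{(r)}$ preserves internal edges, and it preserves external edges because $(x+c)+(y+c)=x+y$. Any permutation of the layer indices is also an automorphism. So to prove the statement it suffices to show the following: for each length $m\in\{3,4,5,6\}$ and each port $t\in\{0,1,2\}$, there is an $m$-cycle in $G_k[L_1]$ containing some vertex $x_t^{(1)}$. Translating by $c$ then moves this cycle onto any prescribed vertex with port $t$. From now on I drop the layer superscript.

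\emph{Lengths $3$ and $4$.} For length $3$, the small triangle $T_x$ contains $x_t$. For length $4$, fix the port $t$ and pick distinct $a,b\in P_t$; this is possible since $|P_t|\ge 2$. The closed walk $x_t,(x+a)_t,(x+a+b)_t,(x+b)_t,x_t$ is a $4$-cycle. Each of its edges is a $t$-external edge, with label $a$ or $b$. Its four vertices are distinct because $a\ne b$ and $a+b\ne 000$.

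\emph{Length $5$.} I will use cycles of the form $x_i,\,x_j,\,y_j,\,y_i,\,w_i,\,x_i$, where $i\ne j$. The edges $x_ix_j$ and $y_jy_i$ are internal. Writing $y=x+c$, the edge $x_jy_j$ needs $c\in P_j$. Writing $w=x+a$, the edges $y_iw_i$ and $w_ix_i$ need $a\in P_i$ and $a+c\in P_i$. So I need $c=a+b$ with $a,b$ distinct elements of $P_i$ and $a+b\in P_j$. The sum computations in the proof of Lemma~\ref{lem:sumfree} supply such pairs:
\begin{itemize}[label={}]
\item $100+010\in P_1$ gives $(i,j)=(0,1)$;
\item $001+110\in P_2$ gives $(1,2)$;
\item $001+101\in P_0$ gives $(1,0)$;
\item $011+111\in P_0$ gives $(2,0)$.
\end{itemize}
The resulting $5$-cycle passes through both $x_i$ and $x_j$, and the pairs above involve every port. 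Hence every port occurs in some $5$-cycle.

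\emph{Length $6$.} I take a cycle that uses all three ports at $x$: $x_0,x_1,y_1,y_2,z_2,z_0,x_0$. Here $y=x+p_1$ and $z=y+p_2$ with $p_1\in P_1$ and $p_2\in P_2$, and the closing edge $z_0x_0$ requires $p_1+p_2\in P_0$. The choice $p_1=001$, $p_2=011$ works, since $001+011=010\in P_0$. The vertices $x$, $y$, $z$ are distinct because $p_1$, $p_2$ and $p_1+p_2$ are all nonzero. This cycle contains $x_0$, $x_1$ and $x_2$.

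The only real obstacle is the length-$5$ case, since parity-type constraints could have blocked it. It reduces to the combinatorial condition that, for every port $t$, some ordered pair $(i,j)$ containing $t$ has two distinct elements of $P_i$ summing into $P_j$. I will verify this directly from the sum table in Lemma~\ref{lem:sumfree}, as listed above. All remaining steps are routine checks of adjacency and distinctness.
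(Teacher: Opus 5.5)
Your proof is correct and builds the same cycles as the paper. Your $5$-cycle is the paper's $5$-cycle re-based at $x+a$, and your $6$-cycle is exactly the paper's $i=0$ case. The only difference is packaging: you use the translation and layer-permutation automorphisms so that one cycle per length meeting every port suffices, whereas the paper chooses the elements separately for each port $i$ so that the cycle passes through $x_i^{(r)}$ directly.

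One slip: your $6$-cycle $x_0x_1y_1y_2z_2z_0$ does not contain $x_2$; it contains $y_2$ and $z_2$ instead. This is harmless, since your symmetry reduction only needs some vertex of each port on the cycle.
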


\begin{proof}
Let $x_i^{(r)}$ be a vertex of $G_k$. By the definition of internal edges, the small triangle $T_x^{(r)}$ gives a $3$-cycle through $x_i^{(r)}$.

For a $4$-cycle, choose two distinct elements $a,b\in P_i$. Then
\[
x_i^{(r)},\quad (x+a)_i^{(r)},\quad (x+a+b)_i^{(r)},\quad (x+b)_i^{(r)}
\]
are four distinct vertices. The consecutive pairs in the displayed order have label differences $a, b, a, b$, respectively, all of which lie in $P_i$. Hence these four vertices give a $4$-cycle through $x_i^{(r)}$.

For a $5$-cycle, again choose distinct $a,b\in P_i$. By Lemma~\ref{lem:sumfree}, $a+b\notin P_i$. Let $j$ be the unique index with $a+b\in P_j$. Then $j\ne i$ and
\[
x_i^{(r)} (x+a)_i^{(r)}
(x+a)_j^{(r)}
(x+b)_j^{(r)}
(x+b)_i^{(r)}
x_i^{(r)}
\]
is a $5$-cycle. 

For a $6$-cycle, we use the following choices. If $i=0$, take
\[
a=010\in P_0,
\qquad b=001\in P_1,
\]
so $a+b=011\in P_2$. If $i=1$, take
\[
a=001\in P_1,
\qquad b=010\in P_0,
\]
so $a+b=011\in P_2$. If $i=2$, take
\[
a=011\in P_2,
\qquad b=001\in P_1,
\]
so $a+b=010\in P_0$.
Thus in every case we have chosen $a\in P_i$, $b\in P_j$, and $a+b\in P_k$, where $i,j,k$ are pairwise distinct. Then
\[
x_i^{(r)}
x_j^{(r)}
(x+b)_j^{(r)}
(x+b)_k^{(r)}
(x+a)_k^{(r)}
(x+a)_i^{(r)}
x_i^{(r)}
\]
is a $6$-cycle through $x_i^{(r)}$. 
\end{proof}

We now construct longer cycles by using an auxiliary edge-colored graph.

Let $R_k$ be the edge-colored graph with vertex set
\[
\{x^{(r)}:x\in Q,\; r\in\{1,\ldots,k\}\},
\]
edge set 
\[
\{x^{(r)}y^{(s)}: x,y\in Q, r,s\in \{1,\dots, k\}, x\ne y\},
\]
 and the color of  edge  $x^{(r)}y^{(s)}$ is $i$, which is the unique $i=0,1,2$ with $x+y\in P_i$. 

A cycle (path, respectively) in $R_k$ is called \emph{properly colored} if any two adjacent edges on the cycle (path, respectively) have different colors. A path on $s$ vertices is denoted by $L_s$.

\begin{lemma}\label{lem:proper-cycles}
For every $q$ with $3\le q\le 8k$ and every vertex $x^{(r)}$ of $R_k$, there exists a properly colored $q$-cycle in $R_k$ containing $x^{(r)}$.
\end{lemma}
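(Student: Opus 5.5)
By the translation symmetry $z\mapsto z+c$ of $Q$, the color of every edge of $R_k$ is preserved, since $(x+c)+(y+c)=x+y$. Permuting the layer indices is also color-preserving. So it suffices to treat the vertex $000^{(1)}$.

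The key observation is a \emph{lifting principle}. Suppose $(z_1,\dots,z_q)$ is a cyclic sequence of labels in $Q$ such that
\begin{itemize}[label=--]
\item consecutive labels are distinct, including $z_q,z_1$;
\item consecutive colors differ, where the color of a step is the index $i$ with $z_t+z_{t+1}\in P_i$;
\item each label occurs at most $k$ times.
\end{itemize}
Give the $j$-th occurrence of a label the layer index $j$. Then $z_1^{(r_1)}\cdots z_q^{(r_q)}$ is a genuine properly colored $q$-cycle in $R_k$, because edge colors depend only on labels. So I will work entirely in the edge-colored $K_8$ on $Q$ (which is $R_1$) and build properly colored closed walks through $000$ that use each label a bounded number of times.

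The first step is to exhibit, by hand, properly colored cycles in $R_1$ through $000$ of every length $t\in\{3,\dots,8\}$. For example, the triangle $000,010,011$ has colors $0,1,2$. The $4$-, $5$- and $6$-cycles can be adapted from the patterns in Lemma~\ref{lem:short-cycles}, and I would search directly for a properly colored Hamiltonian cycle $C$ of $R_1$. I record, for each such cycle, the pair of colors on its two edges at $000$.

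Next I glue closed walks at $000$. If $W_1$ enters $000$ with color $\gamma_1$ and leaves with color $\delta_1$, and $W_2$ leaves with $\beta_2$ and enters with $\gamma_2$, then the concatenation $W_1W_2$ is properly colored at both junctions provided $\gamma_1\ne\beta_2$ and $\gamma_2\ne\delta_1$. Reversing a cycle swaps its two end-colors, and the automorphisms of $Q$ fixing $000$ that preserve the partition may also help to adjust them.

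Now write $q=8m+t$.
\begin{itemize}[label=--]
\item If $t=0$, traverse $C$ exactly $m$ times. Each label is used $m\le k$ times, and the walk is properly colored because $C$ is.
\item If $3\le t\le 7$, concatenate $m$ copies of $C$ with a $t$-cycle $C_t$ through $000$ whose end-colors are compatible with those of $C$. Then $000$ is used $m+1$ times and every other label at most $m+1$ times. Since $8m+t\le 8k$ with $t\ge1$ forces $m+1\le k$, the lifting principle applies.
\item If $t\in\{1,2\}$ and $m\ge1$, replace one copy of $C$ together with the remainder by two short cycles glued at $000$, using $9=4+5$ and $10=5+5$ or $3+7$. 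This gives $m-1$ copies of $C$ plus two short cycles, so each label is again used at most $m+1\le k$ times.
\end{itemize}
Lengths $3\le q\le 8$ are already covered by the first step.

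The main obstacle is the finite verification in the first step together with the compatibility conditions at $000$. One must find a properly colored Hamiltonian cycle of the colored $K_8$ and short cycles whose end-colors at $000$ mesh with it. This is a concrete, finite check. I would first try to choose $C$ with its two edges at $000$ colored $0$ and $1$, and then pick each $C_t$ to use colors avoiding the conflicts, reversing a cycle when needed. The three color classes at $000$ (of sizes $2,3,2$) give enough freedom for such choices.
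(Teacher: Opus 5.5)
Your framework is sound and is essentially the paper's argument with different bookkeeping. The symmetry reduction, the occurrence-based lifting principle, gluing closed walks at $000$, and the count $m+1\le k$ are all correct. The paper's cycle is exactly a concatenation of closed walks at $x$ in the colored $K_8$: the paths $L_\ell$, each closed by a color-$2$ edge, with the $j$-th piece placed in layer $r_j$. That is one instance of your lift, and its $s=\lceil q/8\rceil\le k$ matches your piece count.

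\textbf{The gap.} The actual content of the lemma is never exhibited: properly colored cycles of each length $4,\dots,8$ through $000$ in $R_1$, including a Hamiltonian one. You only give the triangle and defer the rest to ``a concrete, finite check.'' The source you suggest does not supply these cycles. The $4$-cycle of Lemma~\ref{lem:short-cycles} uses a single port, so its labels $x,x+a,x+a+b,x+b$ form a monochromatic $4$-cycle in $R_1$. The $5$- and $6$-cycles there visit only the three labels $x,x+a,x+b$. As written, the proof is a correct plan with its key step missing.

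\textbf{How to close it.} Take the label sequences
$000,001,011$; $000,001,010,011$; $000,001,010,011,111$; $000,001,010,100,011,111$; $000,001,010,011,100,101,111$; and $000,001,010,011,100,101,110,111$, each closed back to $000$.
Their color sequences are
$1,0,2$; $1,2,1,2$; $1,2,1,0,2$; $1,2,1,2,0,2$; $1,2,1,2,1,0,2$; and $1,2,1,2,1,2,1,2$.
So each is properly colored, the last is a properly colored Hamiltonian cycle of $R_1$, and each leaves $000$ with color $1$ and returns with color $2$. Any concatenation of them at $000$ is therefore properly colored at every junction.

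\textbf{Compatibility.} Your compatibility concern is vacuous in general. Suppose $W_1$ has first and last colors $\delta\ne\gamma$ and $W_2$ has distinct end colors. Then one orientation of $W_2$ satisfies both junction conditions, and the glued walk again has distinct end colors. With these cycles your case analysis $q=8m+t$ goes through verbatim. It is the special case of the paper's decomposition $q=q_1+\cdots+q_s$, $q_j\in\{3,\dots,8\}$, in which all but one or two parts equal $8$.
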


\begin{proof}
Let
\[
a=100,
\qquad
b=010,
\qquad
c=001.
\]
We first list six paths in any layer (we omit the layer number here), all starting at $z=000$ in the following table,  the color of an edge between two labels $x,y\in Q$ is the unique index $i$ such that $x+y\in P_i$. 
\[
\begin{array}{c|l|l}
\ell & \text{path }L_\ell & \text{color sequence along }L_\ell\\
\hline
3 & zc(b+c) & 1,0 \\
4 & zcb(b+c) & 1,2,1 \\
5 & zcb(b+c)(a+b+c) & 1,2,1,0\\
6 & zcba(b+c)(a+b+c) & 1,2,1,2,0 \\
7 & zcb(b+c)a(a+c)(a+b+c) & 1,2,1,2,1,0 \\
8 & zcb(b+c)a(a+c)(a+b)(a+b+c) & 1,2,1,2,1,2,1
\end{array}
\]
 In particular, for every $\ell\in\{3,4,5,6,7,8\}$:
\begin{enumerate}[label=(\alph*)]
\item $L_\ell$ is a properly colored path on $\ell$ vertices;
\item the first edge of $L_\ell$ has color $1$;
\item the last vertex of $L_\ell$ belongs to $P_2$;
\item the color of the last edge of $L_\ell$ is different from $2$.
\end{enumerate}

Now fix $q$ with $3\le q\le 8k$ and fix a vertex $x^{(r)}$ of $R_k$. Let $s=\left\lceil \frac{q}{8}\right\rceil$.
Then $s\le k$ and
\[
3s\le q\le 8s.
\]
Therefore we may write
$q=q_1+q_2+\cdots+q_s$
with each $q_j\in\{3,\dots,8\}$. 

Choose distinct layers $r_1,\ldots,r_s$ with $r_1=r$. In layer $r_j$, place a `translated' copy of the path $L_{q_j}$, obtained by adding $x$ to every label. We denote the resulting path by $L_j^*$. Thus $L_j^*$ starts at $x^{(r_j)}$ and ends at $(x+h_j)^{(r_j)}$ for some $h_j\in P_2$.

For $j=1,\ldots,s$, note that $(x+h_j)^{(r_j)}x^{(r_{j+1})}\in E(R_k)$, where the index $j$ takes modulo $s$, and as $h_j\in P_2$, the edge has color $2$.
Let \[
C=x^{r}L_1^*(x+h_1)^{(r_1)}x^{(r_{2})}L_2^*\cdots L_s^*(x+h_s)^{(r_s)}x^{(r)}.
\]
Then $C$ is a cycle of length $q_1+q_2+\cdots+q_s=q$. 
For $j=1,\dots,s$, each $L_j^*$ is properly colored, and neither the first edge nor the last edge has color $2$. Hence $C$ is properly colored, as desired.
\end{proof}

\begin{lemma}\label{lem:lift}
Let $C$ be a properly colored $q$-cycle in $R_k$ containing $x^{(r)}$. For every integer $m$ with $2q< m\le 3q$, and for every port $i\in\{0,1,2\}$, the graph $G_k$ contains an $m$-cycle through $x_i^{(r)}$.
\end{lemma}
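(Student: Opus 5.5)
Write $C=v_1v_2\cdots v_qv_1$ with $v_t=y_t^{(r_t)}$ and $v_1=x^{(r)}$. Let $c_t\in\{0,1,2\}$ be the colour of the edge $v_tv_{t+1}$, with indices taken modulo $q$. I would lift $C$ to $G_k$ by replacing every vertex $v_t$ with a path inside its small triangle $T_{y_t}^{(r_t)}$. The edge $v_{t-1}v_t$ of colour $c_{t-1}$ lifts to the $c_{t-1}$-external edge $(y_{t-1})_{c_{t-1}}^{(r_{t-1})}(y_t)_{c_{t-1}}^{(r_t)}$, because $y_{t-1}+y_t\in P_{c_{t-1}}$. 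The cycle therefore enters $T_{y_t}^{(r_t)}$ at port $c_{t-1}$ and leaves it at port $c_t$. Since $C$ is properly colored, $c_{t-1}\ne c_t$, so the two ports are distinct. Inside the triangle we can go from port $c_{t-1}$ to port $c_t$ in one of two ways: directly along one internal edge, which uses $2$ vertices of the triangle, or through the third port $h_t$, which uses all $3$ vertices. The vertices $v_t$ are distinct vertices of $R_k$, so the triangles $T_{y_t}^{(r_t)}$ are pairwise disjoint. Hence any combination of these choices gives a cycle in $G_k$, and it has exactly $\sum_t n_t$ vertices, where each $n_t\in\{2,3\}$.

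To get length $m$ with $2q<m\le 3q$, I would take $n_t=3$ for exactly $m-2q$ of the indices $t$ and $n_t=2$ for the rest. The number $m-2q$ lies in $\{1,\dots,q\}$, so this is always possible. The lifted cycle passes through the vertices $(y_t)_{c_{t-1}}$ and $(y_t)_{c_t}$ of each triangle, and also through $(y_t)_{h_t}$ whenever $n_t=3$.

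The remaining requirement, which I expect to be the only delicate point, is that the cycle contains $x_i^{(r)}$, the port-$i$ vertex of the triangle of $v_1$, for the prescribed $i$. At $v_1$ the ports $c_q$ and $c_1$ are distinct, and the third port $h_1$ is the remaining one, so every $i$ is one of these three. If $i\in\{c_q,c_1\}$, then $x_i^{(r)}$ lies on the cycle whatever $n_1$ is. If $i=h_1$, then I choose $n_1=3$. Since $m-2q\ge 1$, at least one index must receive value $3$, so I may assign a $3$ to $t=1$ and distribute the other $m-2q-1$ threes among the other $q-1$ triangles; this works because $m-2q-1\le q-1$.

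The final step is to confirm that the resulting closed walk is a cycle of length $m$. Its vertex count is $\sum_t n_t=2q+(m-2q)=m$. Distinctness of its vertices follows from the disjointness of the triangles. Consecutive vertices are joined by internal edges or by the external edges described above, and $q\ge 3$ ensures that the lifted sequence is a genuine cycle.
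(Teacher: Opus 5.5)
Your proposal is correct and follows the paper's proof essentially step for step. Each vertex of $C$ is lifted to a $2$- or $3$-vertex path inside its small triangle, from port $c_{t-1}$ to port $c_t$, and consecutive triangles are joined by the $c_t$-external edges. The long path is forced at $x^{(r)}$ exactly when $i$ is the third port, which is where $m>2q$ is used; your explicit count of the remaining $m-2q-1\le q-1$ long paths just makes the paper's ``each replacement adds one'' argument precise.
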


\begin{proof}
Let $C=X_1\dots X_qX_1$ with  $X_1=x^{(r)}$. Let $X_j=\xi_j^{(\rho_j)}$ with $\xi_j\in Q$ and $\rho_j\in\{1,\ldots,k\}$. Let $c_j$ be the color of the edge $X_jX_{j+1}$ with indices modulo $q$. Then $c_0=c_q$.

We lift a properly colored $q$-cycle $C = X_1\ldots X_qX_1$ in $R_k$ to a cycle in $G_k$. 
For each $X_j = \xi_j^{(\rho_j)}$ in $C$, we take the small triangle $T_{\xi_j}^{(\rho_j)}$ in $G_k$.
The edge $X_{j-1}X_j$ has color $c_{j-1}$, which corresponds to the $c_{j-1}$-extremal edge between $T_{\xi_{j-1}}^{(\rho_{j-1})}$ and $T_{\xi_j}^{(\rho_j)}$.
Since $C$ is properly colored, $c_{j-1}\ne c_j$ for every $j$. To obtain a cycle in $G_k$ from $R_k$ using $C$, in the small triangle $T_{\xi_j}^{(\rho_j)}$ of $G_k$, we must move from port $c_{j-1}$ to port $c_j$. There are two possible internal paths:
\begin{itemize}
\item the short path $(\xi_j)_{c_{j-1}}^{(\rho_j)} (\xi_j)_{c_j}^{(\rho_j)}$;
\item the long path $(\xi_j)_{c_{j-1}}^{(\rho_j)} (\xi_j)_d^{(\rho_j)} (\xi_j)_{c_j}^{(\rho_j)}$,
where $d$ is the unique element of $\{0,1,2\}\setminus\{c_{j-1},c_j\}$.
\end{itemize}
After the chosen internal path in $T_{\xi_j}^{(\rho_j)}$, we use the $c_j$-external edge of color $c_j$ to move to the next small triangle. This edge exists because $X_jX_{j+1}$ has color $c_j$ in $R_k$.

Thus the properly colored $q$-cycle in $R_k$ lifts to a simple cycle in $G_k$. The vertices $X_1,\ldots,X_q$ are distinct in $R_t$, and hence each corresponding small triangle of $G_t$ is used at most once. If all $q$ small triangles use the short path, the lifted cycle has length $2q$. Each time we replace one short path by the long path, the length increases by $1$. Hence we can obtain lifted cycles of all lengths between $2q$ and $3q$. 

If $i\in \{c_0,c_1\}$, then even using a short path at $X_1$ passes through $x_i^{(r)}$, so any $m$-cycle ($2q\le m\le 3q$) through $x_i^{(r)}$ is achievable. If $i \notin \{c_0, c_1\}$, then a long path must be used at $X_1$, this requires $m \ge 2q + 1$, and any $m$-cycle ($2q+1\le m\le 3q$) through $x_i^{(r)}$ is achievable. This completes the proof.
\end{proof}

\begin{proposition}\label{prop:vertex-pancyclic}
For every positive integer $k$, the graph $G_k$ is vertex-pancyclic.
\end{proposition}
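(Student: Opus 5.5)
We must show that every vertex $x_i^{(r)}$ lies in an $m$-cycle for every $m\in\{3,\dots,24k\}$. The plan is to combine Lemma~\ref{lem:short-cycles}, which covers $m\in\{3,4,5,6\}$, with Lemmas~\ref{lem:proper-cycles} and \ref{lem:lift}, which cover all larger lengths. Fix $m$ with $7\le m\le 24k$ and a vertex $x_i^{(r)}$.

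\textbf{Choice of $q$.} We want an integer $q$ with $3\le q\le 8k$ and $2q<m\le 3q$, that is, $m/3\le q<m/2$. Take $q=\lceil m/3\rceil$. Then $m\le 3q$ holds automatically, and $q\le \lceil 24k/3\rceil=8k$. It remains to check the two remaining conditions, $q\ge 3$ and $2q<m$.

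\begin{itemize}
\item For $m\ge 7$ we get $q\ge\lceil 7/3\rceil=3$.
\item Writing $m=3t+\epsilon$ with $\epsilon\in\{0,1,2\}$:
\begin{itemize}
\item if $\epsilon=0$, then $q=t$ and $2t<3t$ as $t\ge 1$;
\item if $\epsilon\in\{1,2\}$, then $q=t+1$, and $2t+2<3t+\epsilon$ iff $t>2-\epsilon$.
\end{itemize}
\end{itemize}

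Only small values of $m$ need checking in the last case. For $\epsilon=1$ the condition is $t>1$, and $m\ge 7$ gives $t\ge 2$. For $\epsilon=2$ the condition is $t>0$, which always holds. So $2q<m$ holds for all $m\ge 7$; for instance $m=7$ gives $q=3$ and $6<7\le 9$.

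\textbf{Combining the lemmas.} With $q$ chosen, Lemma~\ref{lem:proper-cycles} gives a properly colored $q$-cycle in $R_k$ through $x^{(r)}$. Lemma~\ref{lem:lift}, applied with port $i$, then gives an $m$-cycle in $G_k$ through $x_i^{(r)}$.

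Together with Lemma~\ref{lem:short-cycles} for $m\le 6$, this covers every length from $3$ to $24k$. The endpoint $m=24k$ uses $q=8k$, which corresponds to a Hamiltonian cycle in $G_k$.

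\textbf{Main obstacle.} The only delicate point is the arithmetic ensuring that the intervals $(2q,3q]$ for $3\le q\le 8k$, together with $\{3,4,5,6\}$, cover $\{3,\dots,24k\}$. The strict inequality $2q<m$ matters because Lemma~\ref{lem:lift} only guarantees $m>2q$ when the port $i$ is not among the two colors at $x^{(r)}$. The case analysis above handles this.
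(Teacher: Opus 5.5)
Your proposal is correct and is essentially the paper's proof. The paper also takes lengths $3$ through $6$ from Lemma~\ref{lem:short-cycles}, and for $7\le m\le 24k$ sets $q=\lceil m/3\rceil$ and chains Lemma~\ref{lem:proper-cycles} with Lemma~\ref{lem:lift}; your case analysis showing $3\le q\le 8k$ and $2q<m\le 3q$ simply spells out arithmetic that the paper asserts in one line.
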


\begin{proof}
Let $x_i^{(r)}$ be an arbitrary vertex of $G_k$. By Lemma~\ref{lem:short-cycles}, $x_i^{(r)}$ lies in cycles of lengths $3,4,5,6$.
Now let $m$ be an integer with $7\le m\le 24k$.
Let $q=\left\lceil \frac{m}{3}\right\rceil$.
Then $3\le q\le 8k$ and $2q<m\le 3q$.
By Lemma~\ref{lem:proper-cycles}, there is a properly colored $q$-cycle in $R_t$ containing $x^{(r)}$. By Lemma~\ref{lem:lift}, this cycle lifts to an $m$-cycle of $G_t$ containing $x_i^{(r)}$. Therefore $x_i^{(r)}$ lies in a cycle of every length $3,\ldots,24k$.
\end{proof}

We are now ready to prove Theorem~\ref{thm:main}.

\begin{proof}[Proof of Theorem~\ref{thm:main}]
Given $k\ge 1$, consider the graph $G_k$ constructed in Section~\ref{sec:construction}. By Lemma~\ref{lem:connectivity}, $G_k$ is $k$-connected. By Proposition~\ref{prop:vertex-pancyclic}, $G_k$ is vertex-pancyclic. By Lemma~\ref{lem:no-pancyclic-edge}, $G_k$ contains no pancyclic edge, as desired.
\end{proof}



\begin{thebibliography}{99}

\bibitem{Bondy1971}
J.A. Bondy,
Pancyclic graphs. I,
\emph{J. Combin. Theory Ser. B} \textbf{11} (1971) 80--84.

\bibitem{BondyIngleton1976}
J.A. Bondy, A. W. Ingleton,
Pancyclic graphs. II,
\emph{J. Combin. Theory Ser. B} \textbf{20} (1976) 41--46.

\bibitem{Broersma1997}
H.J. Broersma,
A note on the minimum size of a vertex pancyclic graph,
\emph{Discrete Math.} \textbf{164} (1997) 29--32.

\bibitem{ChvatalErdos1972}
V. Chv\'{a}tal,  P. Erd\H{o}s,
A note on Hamiltonian circuits,
\emph{Discrete Math.} \textbf{2} (1972) 111--113.

\bibitem{CreamGouldHirohata2018}
M. Cream, R.J. Gould, K. Hirohata,
Extending vertex and edge pancyclic graphs,
\emph{Graphs Combin.} \textbf{34} (2018) 1691--1711.

\bibitem{Gould2003}
R.J. Gould,
Advances on the Hamiltonian problem: a survey,
\emph{Graphs Combin.} \textbf{19} (2003) 7--52.

\bibitem{Hendry1990}
G.R.T. Hendry,
Extending cycles in graphs,
\emph{Discrete Math.} \textbf{85} (1990) 59--72.

\bibitem{Hobbs1976}
A.M. Hobbs,
The square of a block is vertex pancyclic,
\emph{J. Combin. Theory Ser. B} \textbf{20} (1976) 1--4.

\bibitem{LiZhan2026}
C. Li, X. Zhan,
Every $2$-connected $[4,2]$-graph of order at least seven contains a pancyclic edge,
\emph{Discrete Math.} \textbf{349} (2026) 115154.

\bibitem{Randerath2002}
B. Randerath, I. Schiermeyer, M. Tewes, L. Volkmann,
Vertex pancyclic graphs,
\emph{Discrete Appl. Math.} \textbf{120} (2002) 219--237.

\bibitem{SchmeichelHakimi1974}
E.F. Schmeichel, S.L. Hakimi,
Pancyclic graphs and a conjecture of Bondy and Chv\'{a}tal,
\emph{J. Combin. Theory Ser. B} \textbf{17} (1974) 22--34.

\bibitem{ZhangHolton1993}
K.M. Zhang, D.A. Holton,
On edge-pancyclic graphs,
\emph{Soochow J. Math.} \textbf{19} (1993) 37--41.

\end{thebibliography}
\end{document}